\documentclass{amsart}

\usepackage{amsmath}
\usepackage{amssymb}
\usepackage[]{amsrefs}
\usepackage{xpatch}
\usepackage{kantlipsum} 
\calclayout

\xpatchcmd{\proof}{\itshape}{\prooflabelfont}{}{}
\newcommand{\prooflabelfont}{\bfseries}

\DefineSimpleKey{bib}{primaryclass}{}
\DefineSimpleKey{bib}{archiveprefix}{}

\BibSpec{arXiv}{%
  +{}{\PrintAuthors}{author}
  +{,}{ \textit}{title}
  +{}{ \parenthesize}{date}
  +{,}{ arXiv }{eprint}
  +{,}{ primary class }{primaryclass}
}

\usepackage{kantlipsum} 
\calclayout
\usepackage{extarrows}
\usepackage{amssymb}
\usepackage[utf8]{inputenc}
\newtheorem{theorem}{Theorem}[section]
\newtheorem{proposition}[theorem]{Proposition}
\newtheorem{lemma}[theorem]{Lemma}
\newtheorem{corollary}[theorem]{Corollary}
\theoremstyle{definition}

\theoremstyle{definition}
\newtheorem{definition}[theorem]{Definition}
\newtheorem{remark}[theorem]{Remark}

\numberwithin{equation}{section}
\usepackage{amsthm,amsmath,amssymb}
\usepackage[all,cmtip]{xy}
\usepackage{amsmath}
\usepackage{amssymb}
\usepackage{amsthm}
\usepackage[]{amsrefs}
\usepackage{hyperref}
\usepackage{xpatch}
\usepackage{amsfonts}
\usepackage{amssymb}
\usepackage[utf8]{inputenc}
\usepackage{amsthm}
\usepackage{stmaryrd}
\usepackage{csquotes}
\usepackage{extarrows}
\MakeOuterQuote{"}
\theoremstyle{definition}

\usepackage{enumitem}
\usepackage{tikz-cd}

\usepackage{blindtext}

\usepackage{url}

\DeclareMathOperator{\Ext}{Ext}

\begin{document}

\title[The index of a numerical semigroup ring]{The index of a numerical semigroup ring}

\author[Richard F. Bartels]{Richard F. Bartels}

\address{Department of Mathematics, Trinity College, 300 Summit St, Hartford, CT, 06106}

\email{rbartels@trincoll.edu}

\urladdr{https://sites.google.com/view/richard-bartels-math/home}

\subjclass[2020]{13B30, 13C13, 13C14, 13C15, 13D02, 13D07, 13E05, 13E15, 13H05, 13H10.}

\keywords{Cohen-Macaulay, generically Gorenstein, canonical ideal, MCM approximation, FID hull}

\title{Cohen-Macaulay approximations over generically Gorenstein rings}
\begin{abstract} Let $(R,\mathfrak{m})$ be a Cohen-Macaulay local ring with canonical module that is generically Gorenstein. In this paper, I prove isomorphisms relating the minimal MCM approximations and minimal FID hulls of modules constructed from a canonical ideal $\omega \subset R$, including \,$\omega/xR$, with $x \in \omega$ a nonzerodivisor,  $(\omega/xR)^{\vee}:=\text{Ext}^1_R(\omega/xR,\omega)$, $R/\omega^2$, and $\omega/\omega^2$. I also prove that if $R$ is not Gorenstein, then \, $\delta_{R}\left(\omega/xR \right)=\delta_{R}\left(\left(\omega/xR \right)^{\vee} \right)=0$ \,and $\gamma_{R}\left(\Omega^{1}_{R}\left(\omega/xR \right) \right)=\gamma_{R}\left(\Omega^{1}_{R}\left(\left(\omega/xR\right)^{\vee}\right) \right)=0$, where $\delta_R$ is Auslander's\, $\delta$-invariant and $\gamma_R$ is the dual $\gamma$-invariant. The results in this paper build on \cite[section 2]{Bartels26}.
\end{abstract}
\maketitle
\large{

\begin{center}
\section{Introduction}
\end{center} \text{} Let $(R,\mathfrak{m})$ be a Cohen-Macaulay local ring with canonical module $\omega$. In the following, I denote the direct sum of $n$ copies of a module $M$ by $M^{(n)}$. For every finitely-generated $R$-module $M$, there is an exact sequence 
\[
0 \longrightarrow Y \xlongrightarrow{\iota} X \longrightarrow M \longrightarrow 0
\] with $Y$ an $R$-module of finite injective dimension and $X$ a maximal Cohen-Macaulay (MCM) $R$-module, called an {\it{MCM approximation}} of $M$. If $Y$ and $X$ have no direct summand in common via $\iota$, then the MCM approximation is {\it{minimal}}, and denoted as follows.
\[
0 \longrightarrow Y_M \longrightarrow X_M \longrightarrow M \longrightarrow 0
\] Dually, there is an exact sequence of $R$-modules 
\[
0 \longrightarrow M \longrightarrow Y' \xlongrightarrow{\pi} X' \longrightarrow 0
\] with $Y'$ an $R$-module of finite injective dimension and $X'$ either an MCM $R$-module or zero, called a {\it{finite injective dimension hull}} (FID hull) of $M$. If $Y'$ and $X'$ have no direct summand in common via $\pi$, then the FID hull is minimal, and denoted as follows.
\[
0 \longrightarrow M \longrightarrow Y^M \longrightarrow X^M \longrightarrow 0
\] Each finitely-generated $R$-module has a minimal MCM approximation and a minimal FID hull. These sequences are unique up to isomorphism of exact sequences inducing the identity on $M$ \cite[Definitions 11.8 and 11.10, Proposition 11.13, Theorem 11.17]{LW12}. For an MCM approximation 
\[
0 \longrightarrow Y \xlongrightarrow{\iota} X \longrightarrow M \longrightarrow 0
\]
if $Y$ and $X$ have a nonzero direct summand $N$ in common via $\iota$, then $N$ is MCM and of finite injective dimension. It follows that $N \cong \omega^{(m)}$ for some positive integer $m$. Likewise, for an FID hull 
\[
0 \longrightarrow M \longrightarrow Y' \xlongrightarrow{\pi} X' \longrightarrow 0
\]
if $Y'$ and $X'$ have a nonzero direct summand $N'$ in common via $\pi$, then $N' \cong \omega^{(n)}$ for some positive integer $n$ \cite[Proposition 11.7]{LW12}. As a result, each MCM approximation and each FID hull of a finitely-generated $R$-module can be written as follows.\\
\begin{proposition}\cite[Propositions 1.5 and 1.6]{Ding90}
Let $(R,\mathfrak{m})$ be a Cohen-Macaulay local ring with canonical module $\omega$. Let $M$ be a finitely-generated $R$-module. Each MCM approximation of $M$ can be written as follows for some non-negative integer $m$.
\[
0 \longrightarrow \omega^{(m)} \oplus Y_{M} \longrightarrow \omega^{(m)} \oplus X_M \longrightarrow M \longrightarrow 0
\]\text{}\\ Likewise, each FID hull of $M$ can be written as follows for some non-negative integer $n$.
\[
0 \longrightarrow M \longrightarrow \omega^{(n)} \oplus Y^{M} \longrightarrow \omega^{(n)} \oplus X^{M} \longrightarrow 0
\] 
\end{proposition}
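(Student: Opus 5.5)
The plan is to prove the statement for MCM approximations by induction on the length of the approximating module, stripping off common summands $\omega$ one at a time. The FID hull statement will then follow by the dual argument. Throughout I use the facts recalled just before the statement: minimal MCM approximations and minimal FID hulls exist and are unique up to isomorphism of exact sequences inducing the identity on $M$, and any common summand is isomorphic to $\omega^{(m)}$ \cite[Proposition 11.7]{LW12}.

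Let $0\to Y\xrightarrow{\iota} X\to M\to 0$ be an MCM approximation. If it is minimal, uniqueness gives an isomorphism with $0\to Y_M\to X_M\to M\to 0$, and we take $m=0$. Otherwise $Y$ and $X$ have a nonzero common summand via $\iota$. By the recalled fact it contains a summand $\omega$. That is, there are decompositions $Y=\omega\oplus Y_1$ and $X=\omega\oplus X_1$ under which $\iota$ restricts to an isomorphism of the $\omega$-summands.

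The key step is to change coordinates so that $\iota$ becomes diagonal. Write
\[
\iota=\begin{pmatrix} u & a\\ b & c\end{pmatrix}
\]
with $u:\omega\to\omega$ an automorphism. Composing with the automorphism $\begin{pmatrix}1&0\\-bu^{-1}&1\end{pmatrix}$ of $X$ kills $b$. Composing with the automorphism $\begin{pmatrix}1&-u^{-1}a\\0&1\end{pmatrix}$ of $Y$ kills $a$. The result is an isomorphic exact sequence of the form
\[
0\to \omega\oplus Y_1\xrightarrow{u\oplus c'} \omega\oplus X_1\to M\to 0 .
\]
The cokernel of $u\oplus c'$ is $\operatorname{coker} c'$, so $0\to Y_1\xrightarrow{c'}X_1\to M\to 0$ is exact. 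Here $Y_1$ has finite injective dimension, being a summand of $Y$, and $X_1$ is MCM or zero. So this is again an MCM approximation of $M$, and the isomorphism of sequences still induces the identity on $M$.

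Since the length of $X_1$ is smaller than that of $X$ (or $\mu(X_1)<\mu(X)$), induction applies. It gives $X_1\cong\omega^{(m-1)}\oplus X_M$ and $Y_1\cong\omega^{(m-1)}\oplus Y_M$, compatibly with the maps. Adding back the diagonal $\omega$ summand yields the claimed form.

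The FID hull case is dual. A common summand $\omega$ of $Y'$ and $X'$ via $\pi$ gives, after the analogous elementary row and column operations, $\pi=u\oplus\pi_1$. The kernel is then unchanged, so $0\to M\to Y'_1\to X'_1\to 0$ is again an FID hull. Induction together with uniqueness of the minimal hull finishes the argument.

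The main obstacle is making precise what "common summand via $\iota$" means, namely that $\iota$ maps a summand of $Y$ isomorphically onto a summand of $X$. This is what legitimizes the matrix reduction. I would take that as the definition from \cite{LW12}. With it, the matrix manipulation above is routine, and termination of the induction is guaranteed because the number of generators of $X$ drops at each step.
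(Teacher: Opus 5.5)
Your proof is correct and follows the route the paper sketches just before the statement (citing Ding for details): a common summand via $\iota$ (or $\pi$) is MCM of finite injective dimension, hence a sum of copies of $\omega$ by \cite[Proposition 11.7]{LW12}, and splitting it off and invoking uniqueness of the minimal approximation or hull gives the claimed form. Your explicit diagonalization of $\iota$ and the induction make that splitting-off step rigorous; one small correction is to induct on $\mu(X)$ rather than on length, since MCM modules over a ring of positive dimension have infinite length.
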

\text{}\\ A Cohen-Macaulay local ring $(R,\mathfrak{m})$ with canonical module $\omega$ is {\it{generically Gorenstein}} if $R_{\mathfrak{p}}$ is a Gorenstein local ring for each minimal prime ideal $\mathfrak{p}$ of $R$. When $R$ is not Gorenstein, this condition is equivalent to $\omega$ being isomorphic to a height one ideal of $R$ (see \cite[Proposition 3.3.18]{BH93} and \cite[Proposition 11.6]{LW12}). Such an ideal is called a {\it canonical ideal} and also denoted $\omega$.\\\\ In the following, I prove isomorphisms relating MCM approximations and FID hulls of modules that are constructed from canonical ideals. These results build on \cite[section 2]{Bartels26}. I use one of these relations to compute {\it{Auslander's $\delta$-invariant}} for $\left(\omega/xR \right)^{\vee}:=\text{Ext}_{R}^{1}\left(\omega/xR, \omega \right)$ and the {\it{$\gamma$-invariant}} for $\Omega_{R}^{1}\left(\omega/xR \right)$, the first syzygy of $\omega/xR$. Here, $x \in \omega$ is an $R$-regular element. \\
\begin{definition}\cite[Definition 11.24]{LW12}
Let $(R, \mathfrak{m})$ be a Cohen-Macaulay local ring with canonical module $\omega$. For a finitely-generated $R$-module $Z$, we define the {\it{free rank}} of $Z$, denoted $\text{f-rank}\, Z$, to be the rank of a maximal free direct summand of $Z$. In other words, 
\[
Z \cong \underline{Z} \oplus R^{(\text{f-rank}\, Z)},
\]
where $\underline{Z}$ has no non-trivial free direct summand. Dually, the {\it{canonical rank}} of $Z$, denoted $\omega$ \text{-rank} $Z$, is the largest integer $n$ such that $\omega^{(n)}$ is a direct summand of $Z$. For a finitely-generated $R$-module $M$, we define $\delta_{R}(M):=\text{f-rank}\, X_{M}$ and $\gamma_{R}(M):=\omega \text{-rank} \,X_{M}$.
\end{definition}
\text{}\\
In Corollary \ref{corollary:2.2} I prove that, for a canonical ideal $\omega \subset R$ and $x \in \omega$ an $R$-regular element, we have 
\[
\delta_{R}\left(\omega/xR \right)=\delta_{R}\left((\omega/xR)^{\vee} \right)=0
\]
and
\[\gamma_{R}\left(\Omega^{1}_{R}(\omega/xR) \right)=\gamma_{R}\left(\Omega^{1}_{R}\left((\omega/xR)^{\vee}\right) \right)=0.
\]
\text{}\\ The following lemma is used throughout section 2. \\ 
\begin{lemma}\cite[Lemma 2.7]{Bartels26}\label{lemma:1.3}
Let $(R,\mathfrak{m})$ be a Cohen-Macaulay local ring. If
\begin{equation*}\label{sequence:2.10.1}
0 \longrightarrow Y \longrightarrow M \longrightarrow X \longrightarrow 0
\end{equation*} is an exact sequence, $X$ is an MCM $R$-module, and $Y$ is an $R$-module of finite injective dimension, then the sequence splits and $M \cong Y \oplus X$.
\end{lemma}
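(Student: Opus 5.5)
The sequence splits exactly when its class in $\Ext^1_R(X,Y)$ is zero, so the plan is to show that $\Ext^1_R(X,Y)=0$ whenever $X$ is MCM and $Y$ has finite injective dimension. In fact I will show $\Ext^i_R(X,Y)=0$ for all $i\geq 1$, since the induction below needs the higher $\Ext$ groups as well. Once this vanishing is established, the given short exact sequence represents the zero class. Hence $M\cong Y\oplus X$.

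To prove the vanishing, let $d=\dim R$. I use the standard structure of finite injective dimension modules over a Cohen-Macaulay local ring with canonical module (for instance \cite[Chapter 11]{LW12} or \cite[Section 3.3]{BH93}). Any $Y$ of finite injective dimension has a finite resolution
\[
0\to \omega^{(n_k)}\to\cdots\to\omega^{(n_0)}\to Y\to 0
\]
by direct sums of copies of $\omega$. The base case is the classical fact that $\Ext^i_R(X,\omega)=0$ for all $i\ge1$ when $X$ is MCM. This holds because $\omega$ has injective dimension $d$ and $\Ext^i_R(X,\omega)$ is dual, via local duality, to local cohomology $H^{d-i}_{\mathfrak m}(X)$, which vanishes for $i\ge 1$ since $\operatorname{depth} X=d$.

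The induction runs on the length $k$ of the $\omega$-resolution. Take the short exact sequence $0\to Y'\to\omega^{(n_0)}\to Y\to 0$, where $Y'$ has an $\omega$-resolution of length $k-1$. The long exact sequence of $\Ext_R(X,-)$ gives
\[
\Ext^i_R(X,\omega^{(n_0)})\to\Ext^i_R(X,Y)\to\Ext^{i+1}_R(X,Y').
\]
For $i\ge1$ the left term vanishes by the base case and the right term vanishes by induction. So $\Ext^i_R(X,Y)=0$ for all $i\ge1$. This is why the statement must be proved for all $i$, not only $i=1$.

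The main obstacle is justifying the existence of the finite $\omega$-resolution of $Y$, equivalently that FID modules lie in the class $\widehat{\operatorname{add}\omega}$. If I cannot simply cite this, I will instead induct directly on $\operatorname{injdim} Y$. I would use dimension shifting on $X$: since $X$ is MCM, $X\cong\Omega^{d}_R(N)$ for some $N$ (take $N=\operatorname{Ext}$-dual constructions via $\omega$). Then $\Ext^i_R(X,Y)\cong\Ext^{i+d}_R(N,Y)$, which vanishes when $i+d>\operatorname{injdim}Y$, and $\operatorname{injdim}Y=\operatorname{depth}R=d$ by Ischebeck/Bass. So I will present this second route as the cleanest: write $X$ as a $d$-th syzygy, which is possible because the $\omega$-dual $X^\dagger$ is MCM and dualizing a free resolution of $X^\dagger$ embeds $X$ in a resolution by modules in $\operatorname{add}\omega$. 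Strictly speaking this gives an $\omega$-coresolution, not a free one, so the shifting needs $\Ext^{\geq1}_R(\omega,Y)=0$; this again reduces to the first route. I expect to rely on the citation from \cite[Chapter 11]{LW12} at this point.
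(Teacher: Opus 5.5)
Your reduction to $\Ext^1_R(X,Y)=0$ is the right one; the paper gives no argument of its own and imports the lemma from \cite{Bartels26}. Your first route is sound \emph{provided $R$ has a canonical module}: the base case $\Ext^{\geq 1}_R(X,\omega)=0$ and the induction along a finite $\operatorname{add}\omega$-resolution of $Y$ are both correct. But the lemma assumes only that $R$ is Cohen--Macaulay, so as written you have not covered the statement. To do so you would need to pass to $\widehat{R}$, which has a canonical module. Note that $\widehat{X}$ is MCM, that $\widehat{Y}$ has finite injective dimension because $\Ext^i_{\widehat{R}}(k,\widehat{Y})\cong\Ext^i_R(k,Y)$, and that $\Ext^1_{\widehat{R}}(\widehat{X},\widehat{Y})\cong\Ext^1_R(X,Y)\otimes_R\widehat{R}$; the vanishing then descends by faithful flatness. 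Even then, everything rests on the theorem that modules of finite injective dimension have finite $\operatorname{add}\omega$-resolutions (Sharp; Auslander--Buchweitz), which is much deeper than the lemma.

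Your second route should be dropped, because it is false that every MCM module is a $d$-th syzygy. Suppose $d\geq 1$ and $R_{\mathfrak{p}}$ is not Gorenstein for some minimal prime $\mathfrak{p}$. Then $\omega$ is MCM but does not even embed in a free module. An embedding $\omega_{\mathfrak{p}}\hookrightarrow R_{\mathfrak{p}}^{(n)}$ would split, since $\omega_{\mathfrak{p}}$ is injective over the Artinian ring $R_{\mathfrak{p}}$. That would make $\omega_{\mathfrak{p}}$ free and $R_{\mathfrak{p}}$ self-injective, a contradiction. As you noticed, replacing the free resolution by an $\omega$-coresolution only moves the problem to $\Ext^{\geq 1}_R(\omega,Y)=0$.

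The cleaner fix, which needs no canonical module at all, is Ischebeck's argument. Assume $Y\neq 0$, so Bass's theorem gives $\operatorname{injdim}Y=\operatorname{depth}R$. Prove by induction on $t=\operatorname{depth}X$ that $\Ext^i_R(X,Y)=0$ for every $i>\operatorname{depth}R-t$.
\begin{itemize}
\item For $t=0$ this is just $i>\operatorname{injdim}Y$.
\item For $t>0$, choose $x\in\mathfrak{m}$ regular on $X$. The sequence $0\to X\xrightarrow{x}X\to X/xX\to 0$ gives an exact sequence $\Ext^i_R(X,Y)\xrightarrow{x}\Ext^i_R(X,Y)\to\Ext^{i+1}_R(X/xX,Y)$. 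Its right-hand term vanishes by induction, since $\operatorname{depth}X/xX=t-1$. Nakayama then gives $\Ext^i_R(X,Y)=0$.
\end{itemize}
For $X$ MCM one has $t=\operatorname{depth}R$, hence $\Ext^{\geq 1}_R(X,Y)=0$ and the sequence splits. This uses only Bass's theorem and Nakayama. Your route buys the Auslander--Buchweitz picture, in which vanishing against $\omega$ propagates through $\widehat{\operatorname{add}\omega}$. The price is an extra hypothesis and a substantial black box.
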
\text{}\\
\begin{center}
\section{Results}
\end{center} \text{}
\begin{definition} Let $(R,\mathfrak{m})$ be a Cohen-Macaulay local ring with canonical module $\omega$. We say that two $R$-modules $M$ and $N$ are $\omega$-{\it{stably isomorphic}}, and write $M \cong_{\omega} N$, if for some non-negative integers $s$ and $t$, we have 
\[
M \oplus \omega^{(s)} \cong N \oplus \omega^{(t)}.
\]
\end{definition}\text{}
\begin{remark}
Suppose $(R,\mathfrak{m})$ is a Cohen-Macaulay local ring with canonical module $\omega$ that is generically Gorenstein. If two $R$-modules $M$ and $N$ are $\omega$-stably isomorphic, then $M_{\mathfrak{p}}$ and $N_{\mathfrak{p}}$ are stably isomorphic $R_{\mathfrak{p}}$-modules for each minimal prime $\mathfrak{p}$ of $R$.
\end{remark}

\text{}\\In \cite{Bartels26}, I proved the following. 
\\
\begin{proposition}\cite[Proposition 2.10]{Bartels26}\label{prop:2.2}
Let $(R,\mathfrak{m})$ be a Cohen-Macaulay local ring that is generically Gorenstein and not Gorenstein. Let $\omega$ be a canonical ideal of $R$ and $x \in \omega$ an $R$-regular element. Let\, $\left(\omega/xR \right)^{\vee}:=\Ext_{R}^{1}\left(\omega/xR, \omega \right)$. Then we have the following.\newline
\begin{enumerate}[label=(\alph*)]
\item $\omega/xR$ is a Cohen-Macaulay $R$-module of codimension $1$ \newline 
\item $\left(\omega/xR \right)^{\vee}:=\Ext_{R}^{1}(\omega/xR, \omega)$ is a Cohen-Macaulay $R$-module of codimension $1$
\newline 
\item $X_{\omega/xR} \cong_{\omega} X_{\left(\omega/xR \right)^{\vee}} \cong_{\omega} X^{R/\omega}$ \newline 
\item There is an exact sequence 
\[
0 \longrightarrow R \longrightarrow \omega^{(n)} \longrightarrow X^{R/\omega} \longrightarrow 0\] with\, $n=\mu_{R}(\omega)$.
\end{enumerate}
\end{proposition}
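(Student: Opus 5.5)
The plan is to handle (a), (b), (d) directly and then obtain (c) from short exact sequences combined with Lemma \ref{lemma:1.3} and the uniqueness of minimal approximations up to $\omega$-summands (the Proposition of Ding).

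For (a), use the exact sequence $0 \to R \xrightarrow{x} \omega \to \omega/xR \to 0$. Since $\omega$ and $R$ are MCM of depth $d=\dim R$, the depth lemma gives $\operatorname{depth}\omega/xR \ge d-1$. Since $x\in\omega$ is regular, $\omega$ has height one, so $\omega/xR$ is annihilated by $x$ and has dimension at most $d-1$. It is nonzero because $\omega\neq xR$, as $R$ is not Gorenstein. Hence $\omega/xR$ is Cohen--Macaulay of dimension $d-1$, that is, of codimension $1$.

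For (b), apply standard canonical duality: for a CM module $N$ of codimension $1$, $\Ext^1_R(N,\omega)$ is again CM of codimension $1$ (\cite[3.3.10]{BH93}). Concretely, applying $\operatorname{Hom}_R(-,\omega)$ to the sequence above gives
\[
0\to \operatorname{Hom}(\omega,\omega)=R \xrightarrow{x} \operatorname{Hom}(R,\omega)=\omega \to (\omega/xR)^{\vee}\to 0,
\]
because $\Ext^1_R(\omega,\omega)=0$. Here the left map is the dual of multiplication by $x$. I would record this sequence for later use.

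For (d), take a minimal generating set $x_1,\dots,x_n$ of $\omega$ with $n=\mu_R(\omega)$, and dualize the surjection $R^{(n)}\to\omega$ into $\omega$. This gives $0\to R\to\omega^{(n)}\to \Ext^1_R(\Omega,\omega)$ wait — more simply, consider $0\to\Omega\to R^{(n)}\to\omega\to 0$. Applying $\operatorname{Hom}(-,\omega)$ yields
\[
0\to R\to \omega^{(n)}\to \operatorname{Hom}(\Omega,\omega)\to 0,
\]
and $\operatorname{Hom}(\Omega,\omega)$ is MCM since $\Omega$ is MCM. The composite $R\to\omega^{(n)}$ is $1\mapsto(x_1,\dots,x_n)$, and it contains $R$; its image mod $\omega$ gives $R/\omega \to \omega^{(n)}/\omega R$? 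Instead I will show directly that $0\to R/\omega\to \omega^{(n)}/\omega\cdot(\ldots)$ — more cleanly, by pushout of $0\to\omega\to R\to R/\omega\to0$ along $\omega\hookrightarrow$ the FID module, one obtains an FID hull $0\to R/\omega\to Y\to X\to 0$. I expect to identify $X\cong_\omega \operatorname{Hom}(\Omega,\omega)$ via a snake-lemma comparison, and then minimality follows after discarding $\omega$-summands. This identification is the main obstacle. The $n$ must be exactly $\mu(\omega)$, and I will need that $R\to\omega^{(n)}$ has image in $\mathfrak m\omega^{(n)}$ so that no $\omega$ splits off.

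For (c), compare the sequence $0\to R\to\omega\to\omega/xR\to0$ with an MCM approximation of $\omega/xR$. The FID hull of $R/\omega$ gives $0\to R/\omega \to Y\to X^{R/\omega}\to0$, and its syzygy-shifted version gives an MCM approximation of $\omega/xR$ with MCM part $X^{R/\omega}$ up to $\omega$-summands. The same argument applied to the dual sequence in (b) treats $(\omega/xR)^\vee$. In each case, Lemma \ref{lemma:1.3} splits the extraneous FID/MCM extensions so that the resulting modules agree up to $\omega$-summands.
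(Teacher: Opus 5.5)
\textbf{There is a genuine gap in (d), and (c) is left underspecified.}

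Your (a) and (b) are fine. In fact the map in your dual sequence is $r\mapsto rx$: the dual of $1\mapsto x$ sends multiplication by $r$ on $\omega$ to its value at $x$. So $(\omega/xR)^{\vee}\cong\omega/xR$, and the first $\cong_{\omega}$ in (c) is immediate.

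The gap is in (d). The step you call the main obstacle is the whole content, and your plan for it fails as written. Pushing out $0\to\omega\to R\to R/\omega\to 0$ along a map out of $\omega$ produces an extension with $R/\omega$ as a quotient, so it is not an FID hull of $R/\omega$. Instead, push out $0\to R\xrightarrow{\alpha}\omega^{(n)}\to\operatorname{Hom}_R(\Omega,\omega)\to 0$, where $\alpha(1)=(x_1,\dots,x_n)$, along $R\twoheadrightarrow R/\omega$. This gives $0\to R/\omega\to\omega^{(n)}/\alpha(\omega)\xrightarrow{\pi}\operatorname{Hom}_R(\Omega,\omega)\to 0$. The middle term has finite injective dimension, being $\omega^{(n)}$ modulo a copy of $\omega$. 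So the MCM part of this hull is $\operatorname{Hom}_R(\Omega,\omega)$ on the nose, and no snake-lemma identification is needed.

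What remains is minimality. You cannot simply \emph{discard $\omega$-summands}: the statement pins $n=\mu_R(\omega)$, so you must show this particular hull is already minimal. The criterion you propose is false. Each $x_i$ is a minimal generator, so $\alpha(1)\notin\mathfrak m\omega^{(n)}$. The right condition lives on the dual side:
\begin{itemize}
\item A common $\omega$-summand via $\pi$ yields $g\colon\operatorname{Hom}_R(\Omega,\omega)\to\omega$ with $g\pi$ surjective.
\item Composing with $\omega^{(n)}\twoheadrightarrow\omega^{(n)}/\alpha(\omega)$ gives a surjection $\omega^{(n)}\to\omega$ vanishing on $\alpha(1)$.
\item Since $\operatorname{Hom}_R(\omega,\omega)=R$, this map is $(y_i)\mapsto\sum r_iy_i$ with $\sum r_ix_i=0$, i.e.\ $(r_1,\dots,r_n)\in\Omega$.
\item Surjectivity forces some $r_i$ to be a unit, by Nakayama.
\end{itemize}
That contradicts $\Omega\subseteq\mathfrak m R^{(n)}$, which is exactly the minimality of $x_1,\dots,x_n$.

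For (c), \emph{its syzygy-shifted version} is not an argument, because you never exhibit a sequence linking $R/\omega$ (or $R$) to $\omega/xR$. Your own sequence suffices once (d) is proved. Push out $0\to R\xrightarrow{x}\omega\to\omega/xR\to 0$ along $\alpha$. The pushout $P$ fits into $0\to\omega\to P\to\operatorname{Hom}_R(\Omega,\omega)\to 0$, which splits by Lemma~\ref{lemma:1.3}. It also fits into $0\to\omega^{(n)}\to P\to\omega/xR\to 0$, which is therefore an MCM approximation. Hence $X_{\omega/xR}\cong_{\omega}\omega\oplus X^{R/\omega}\cong_{\omega}X^{R/\omega}$.

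Alternatively, do as the paper does in the proof of the next proposition. Pull back the minimal MCM approximation of $\omega/xR$ along $0\to xR/x\omega\to\omega/x\omega\to\omega/xR\to 0$. Since $xR/x\omega\cong R/\omega$ and $\omega/x\omega$ has finite injective dimension, the middle row is an FID hull of $R/\omega$ with MCM part $X_{\omega/xR}$. Combined with $(\omega/xR)^{\vee}\cong\omega/xR$, this gives all of (c).
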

\text{}\\ In the following proposition, I further describe minimal MCM approximations and FID hulls of the modules $\omega/xR$, $\left(\omega/xR\right)^{\vee}$, and $R/\omega$.\\\\
\begin{proposition}
Let $(R,\mathfrak{m})$ be a Cohen-Macaulay local ring with canonical module that is generically Gorenstein and not Gorenstein. Let $\omega$ be a canonical ideal of $R$ and $x \in \omega$ an $R$-regular element. Then exactly one of the following is true.\\ 
\begin{enumerate}
\item $X_{\omega/xR} \cong X^{R/\omega} \cong X^R$ \\
\item $X_{\omega/xR} \cong X^{R/\omega} \cong \omega \oplus X^R$ \\
\item $X_{\omega/xR} \cong \omega \oplus X^{R/\omega} \cong \omega \oplus X^R$
\end{enumerate}\text{}\\\\ Moreover, for\, $(\omega/xR)^{\vee}:={\Ext}^{1}_{R}(\omega/xR,\omega)$, exactly one of the following is true. \\
\begin{enumerate}[start=4]
 \item $R \oplus X_{(\omega/xR)^{\vee}} \cong X_{R/xR}$ \\
 \item $R \oplus X_{(\omega/xR)^{\vee}} \cong \omega \oplus X_{R/xR}$
\end{enumerate}  
\end{proposition}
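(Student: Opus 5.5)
The plan is to build explicit (not necessarily minimal) MCM approximations by pushouts along a minimal FID hull of $R$, split them with Lemma \ref{lemma:1.3}, and compare with minimal ones via Proposition 1.1. Cancellation of $\omega$-summands will be done after passing to the completion, where Krull--Schmidt holds; isomorphism of finitely generated modules descends from $\widehat{R}$.

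\textbf{First trichotomy.} Fix a minimal FID hull $0\to R\to \omega^{(n')}\to X^R\to 0$. Applying $\mathrm{Hom}_R(-,\omega)$ gives a surjection $R^{(n')}\to \omega$, so $n'\ge \mu_R(\omega)=n$. Set $N=\omega/xR$. Form the pushout of this hull along $R\xrightarrow{\cdot x}\omega$, using the sequence $0\to R\to\omega\to N\to 0$. This gives exact sequences
\[
0\to \omega\to P\to X^R\to 0,\qquad 0\to \omega^{(n')}\to P\to N\to 0 .
\]
By Lemma \ref{lemma:1.3}, the first splits, so $P\cong \omega\oplus X^R$ is MCM. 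Then the second is an MCM approximation of $N$, and Proposition 1.1 gives $\omega\oplus X^R\cong \omega^{(m)}\oplus X_N$. Cancelling, either $X_N\cong X^R$ (when $m=1$) or $X_N\cong\omega\oplus X^R$ (when $m=0$).

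The same pushout run with the sequence of Proposition \ref{prop:2.2}(d) gives $\omega\oplus X^{R/\omega}\cong \omega^{(m')}\oplus X_N$. Since $0\to R\to\omega^{(n)}\to X^{R/\omega}\to 0$ is an FID hull of $R$, Proposition 1.1 also gives $X^{R/\omega}\cong\omega^{(k)}\oplus X^R$ with $n=n'+k$. Combining this with $n'\ge n$ bounds $k$. Putting the two isomorphisms for $X_N$ together with this constraint leaves only the listed possibilities (1)--(3), and they are visibly mutually exclusive after cancellation in $\widehat R$. The main obstacle is this bookkeeping of $\omega$-multiplicities: one has to check that the combination ``$X^{R/\omega}$ has an extra $\omega$ but $X_{\omega/xR}$ does not'' is excluded. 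I would get this from the count $n=n'+k\le n'$ and from matching $\omega$-ranks of the two expressions for $X_N$.

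\textbf{Second dichotomy.} Apply $\mathrm{Hom}_R(-,\omega)$ to $0\to R\to\omega\to N\to0$. Using $\mathrm{Hom}_R(N,\omega)=0$, $\mathrm{Hom}_R(\omega,\omega)=R$ and $\mathrm{Ext}^1_R(\omega,\omega)=0$, this gives $0\to R\to \omega\to N^{\vee}\to 0$. So the identical pushout argument yields $X_{N^\vee}\cong X^R$ or $\omega\oplus X^R$.

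For $R/xR$, use $0\to \omega/xR\to R/xR\to R/\omega\to 0$. Here $0\to\omega\to R\to R/\omega\to0$ is an MCM approximation of $R/\omega$, and it is minimal since $R$ is not Gorenstein, so $R\not\cong\omega$. I would apply the horseshoe construction for MCM approximations, which is legitimate because $\mathrm{Ext}^1_R(\text{MCM},\text{FID})=0$. Combining this with the approximation $0\to\omega^{(n')}\to\omega\oplus X^R\to N\to0$ gives an MCM approximation of $R/xR$ of the form
\[
0\to \omega^{(n'+1)}\to E\to R/xR\to0,\qquad 0\to \omega\oplus X^R\to E\to R\to 0 .
\]
The latter splits, since $R$ is free, so $R\oplus\omega\oplus X^R\cong \omega^{(s)}\oplus X_{R/xR}$.

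Substituting the two possibilities for $X_{N^\vee}$, namely $R\oplus X_{N^\vee}\cong R\oplus X^R$ or $R\oplus\omega\oplus X^R$, and cancelling in $\widehat R$ gives exactly (4) or (5). They are exclusive by comparing $\omega$-ranks.
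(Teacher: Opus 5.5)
\textbf{The gap is in the second dichotomy.} You derive two binary facts independently: $X_{N^\vee}\in\{X^R,\ \omega\oplus X^R\}$, and $R\oplus\omega\oplus X^R\cong\omega^{(s)}\oplus X_{R/xR}$ with $s\in\{0,1\}$. Nothing in your argument links the two choices. Take $X_{N^\vee}\cong X^R$ and $s=0$. Then $X_{R/xR}\cong\omega\oplus R\oplus X_{N^\vee}$, which is neither (4) nor (5), so ``substituting and cancelling'' does not yield the dichotomy.

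The cause is that your horseshoe uses the padded approximation $0\to\omega^{(n')}\to\omega\oplus X^R\to N\to 0$. That sequence no longer records whether $X_N$ carries the extra $\omega$. What you need is an MCM approximation of $R/xR$ whose middle term is $R$ plus the \emph{minimal} $X_{N^\vee}$; this gives $R\oplus X_{N^\vee}\cong\omega^{(b)}\oplus X_{R/xR}$ and excludes the bad case. The paper does this with the sequence $0\to\omega\to R\oplus(\omega/xR)^\vee\to R/xR\to 0$ and two pullbacks, which give $\omega\oplus X_{R/xR}\cong\omega^{(a)}\oplus R\oplus X_{(\omega/xR)^\vee}$ and $R\oplus X_{(\omega/xR)^\vee}\cong\omega^{(b)}\oplus X_{R/xR}$, hence $a+b=1$.

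In your setup the repair is short. Run the horseshoe with $0\to Y_N\to X_N\to N\to 0$ instead. Note also that your dual sequence $0\to R\to\omega\to N^\vee\to 0$ is again multiplication by $x$, so $N^\vee\cong N$ and $X_{N^\vee}\cong X_N$. Then $R\oplus X_N\cong\omega^{(b)}\oplus X_{R/xR}$. If $X_N\cong X^R$ this forces $b=0$, which is (4). If $X_N\cong\omega\oplus X^R$ it gives $b\in\{0,1\}$, which is (4) or (5).

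\textbf{A smaller omission affects both parts.} Proposition 1.1 alone does not give $m\le 1$ or $s\le 1$ (nor $b=0$ above). You need the fact that $X^R$ has no direct summand $\omega$. This follows from minimality of $0\to R\to\omega^{(n')}\xrightarrow{\pi}X^R\to 0$. If $X^R\cong\omega\oplus X'$, composing $\pi$ with the projection onto $\omega$ gives a surjection $\omega^{(n')}\to\omega$. It splits because $\mathrm{End}_R(\omega)=R$ is local, and that exhibits a common summand $\omega$ via $\pi$.

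With this added, your first part is correct, and it takes a different route from the paper. The paper builds an FID hull of $R/\omega$ by a pullback and imports Proposition 2.9(c) of Bartels26. You instead prove that input directly, via the pushout of the minimal FID hull of $R$ along $R\xrightarrow{x}\omega$. Moreover, your count $n=n'+k$ together with $n'\ge n$ forces $k=0$, that is, $X^{R/\omega}\cong X^R$. So alternative (2) never actually occurs, which is sharper than the paper's trichotomy.
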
\text{}
\begin{proof} Consider the pullback diagram for the exact sequence \\
\begin{equation}
0 \longrightarrow xR/x\omega \longrightarrow \omega/x\omega \longrightarrow \omega/xR \longrightarrow 0
\end{equation}\text{}\\ and the minimal MCM approximation of $\omega/xR$.

\begin{equation}\label{diagram:1.2}
\xymatrix{&& 0 \ar[d] & 0 \ar[d] \\
&& Y_{\omega/xR} \ar@{=}[r] \ar[d] & Y_{\omega/xR} \ar[d]\\  
0 \ar[r] &
xR/x\omega \ar@{=}[d] \,\ar[r] & Z \ar[r] \ar[d] & X_{\omega/xR} \ar[d] \ar[r]  & 0 \\
0 \ar[r] & xR/x\omega \ar[r] & \omega/x\omega \ar[r] \ar[d] & \omega/xR \ar[r] \ar[d]& 0 \\
&& 0 & 0 \\
} 
\end{equation}
\text{}\\
From the middle column of diagram \ref{diagram:1.2}, we see that $Z$ has finite injective dimension. Therefore, the middle row is an FID hull for $R/\omega$\, and\, $X_{\omega/xR} \cong \omega^{(s)} \oplus X^{R/\omega}$ for some $s \geq 0$. By the exact sequence in Proposition \ref{prop:2.2} (d), we have $X^{R/\omega} \cong \omega^{(t)} \oplus X^R$ for some $t \geq 0$. Finally, by \cite[Proposition 2.9 (c)]{Bartels26}, we have $X_{\omega/xR} \cong \omega^{(r)} \oplus X^R$, where $r=0$ or $r=1$. First suppose\, $r=0$. Then we have 
\[
X_{\omega/xR} \cong \omega^{(s+t)} \oplus X_{\omega/xR}.
\] So \,$s=t=0$ and 
\[
X_{\omega/xR} \cong X^{R/\omega} \cong X^R.
\] \text{}\\ Now suppose \,$r=1$. Then we have
\[
\omega \oplus X^R \cong \omega^{(s+t)} \oplus X^R.
\]\text{}\\ So\, $s=0$ \,and\, $t=1$ \,or\, $s=1$ \,and\, $t=0$. If\, $s=0$  and  $t=1$, \, then we have \\
\[
X_{\omega/xR} \cong X^{R/\omega} \cong \omega \oplus X^R.
\] If $s=1$ and $t=0$, then we have 
\[
X_{\omega/xR} \cong \omega \oplus X^{R/\omega} \cong \omega \oplus X^{R}.
\] Consider again the sequence
\[
0 \longrightarrow xR/x\omega \longrightarrow \omega/x\omega \longrightarrow \omega/xR \longrightarrow 0.
\]\text{}\\ Each module in this sequence is a Cohen-Macaulay $R$-module of codimension $1$ by Proposition \ref{prop:2.2} (a) and \cite[Proposition 3.3.18]{BH93}. Applying $\text{Hom}_{R}(-,\omega)$, we obtain the exact sequence
\begin{equation}\label{sequence 1.3}
0 \longrightarrow (\omega/xR)^{\vee} \longrightarrow R/xR \longrightarrow R/\omega \longrightarrow 0.
\end{equation} \text{}\newline Consider the pullback diagram for sequence \ref{sequence 1.3} and the exact sequence 
\[
0 \longrightarrow \omega \longrightarrow R \longrightarrow R/\omega \longrightarrow 0.
\] 
\begin{equation}\label{diagram:1.3}
\xymatrix{&& 0 \ar[d] & 0 \ar[d] \\
&& \omega \ar@{=}[r] \ar[d] & \omega \ar[d]\\  
0 \ar[r] &
(\omega/xR)^{\vee} \ar@{=}[d] \,\ar[r] & Z \ar[r] \ar[d] & R \ar[d] \ar[r]  & 0 \\
0 \ar[r] & (\omega/xR)^{\vee} \ar[r] & R/xR \ar[r] \ar[d] & R/\omega \ar[r] \ar[d]& 0 \\
&& 0 & 0 \\
} 
\end{equation}
\text{}\vspace{0.3cm}
The middle row of diagram \ref{diagram:1.3} splits, so the middle column gives us the exact sequence 
\begin{equation}\label{sequence:1.5}
0 \longrightarrow \omega \longrightarrow R \oplus (\omega/xR)^{\vee} \longrightarrow R/xR \longrightarrow 0.
\end{equation}
\text{}\\
Now consider the pullback diagram for sequence \ref{sequence:1.5} and the minimal MCM approximation of $R/xR$. 

\begin{equation}\label{diagram:1.6}
\xymatrix{&& 0 \ar[d] & 0 \ar[d] \\
&& Y_{R/xR} \ar@{=}[r] \ar[d] & Y_{R/xR} \ar[d]\\  
0 \ar[r] &
\omega \ar@{=}[d] \,\ar[r] & Z \ar[r] \ar[d] & X_{R/xR} \ar[d] \ar[r]  & 0 \\
0 \ar[r] & \omega \ar[r] & R \oplus (\omega/x\omega)^{\vee} \ar[r] \ar[d] & R/xR \ar[r] \ar[d]& 0 \\
&& 0 & 0 \\
}
\end{equation}
\text{}\\ The middle row of diagram \ref{diagram:1.6} splits by Lemma \ref{lemma:1.3}, so the middle column gives us the exact sequence 
\begin{equation}\label{sequence:1.7}
0 \longrightarrow Y_{R/xR} \longrightarrow \omega \oplus X_{R/xR} \longrightarrow R \oplus (\omega/x\omega)^{\vee} \longrightarrow 0.
\end{equation} \text{}\\ Sequence \ref{sequence:1.7} is an MCM approximation of $R \oplus (\omega/x\omega)^{\vee}$, so we have \\
\[
\omega \oplus X_{R/xR} \cong_{\omega} X_{R \,\oplus\, (\omega/xR)^{\vee}} \cong R \oplus X_{(\omega/xR)^{\vee}}
\]\text{}\\ and for some $a \geq 0$, we have
\begin{equation}\label{isom:1.8}
\omega \oplus X_{R/xR}  \cong \omega^{(a)} \oplus R \oplus X_{(\omega/xR)^{\vee}}.
\end{equation} \text{}\\ Now consider the pullback diagram for sequence \ref{sequence:1.5} and the exact sequence 
\[
0 \longrightarrow Y_{(\omega/xR)^{\vee}} \longrightarrow R\oplus X_{(\omega/xR)^{\vee}} \longrightarrow R \oplus (\omega/xR)^{\vee} \longrightarrow 0. 
\]

\begin{equation}\label{diagram:2.1}
\xymatrix{& 0 \ar[d] & 0 \ar[d] \\
& Y_{(\omega/xR)^{\vee}} \ar@{=}[r] \ar[d] & Y_{(\omega/xR)^{\vee}} \ar[d]\\  
0 \ar[r] &
Z \ar[d] \,\ar[r] & R \oplus X_{(\omega/xR)^{\vee}} \ar[r] \ar[d] & R/xR \ar@{=}[d] \ar[r]  & 0 \\
0 \ar[r] & \omega \ar[r] \ar[d] & R \oplus (\omega/xR)^{\vee} \ar[r] \ar[d] & R/xR \ar[r] & 0 \\
& 0 & 0 \\
}
\end{equation}
\text{}\\ By the first column, we see that $Z$ has finite injective dimension. Therefore, The middle row is an MCM approximation of $R/xR$, and for some $b \geq 0$, we have \\
\[
R \oplus X_{(\omega/xR)^{\vee}} \cong \omega^{(b)} \oplus X_{R/xR}.
\]\text{}\\
With isomorphism \ref{isom:1.8}, this gives us \\
\[
\omega \oplus X_{R/xR} \cong \omega^{(a+b)} \oplus X_{R/xR}.
\]\text{}\\
Therefore, we have $a=0$ \,and\, $b=1$\, or \,$a=1$\, and\, $b=0$. If \,$a=0$  and  $b=1$,  then we have \\
\begin{equation}
R \oplus X_{(\omega/xR)^{\vee}} \cong \omega \oplus X_{R/xR}.
\end{equation} \text{}\\ If $a=1$ and $b=0$, \,then 
\[
R \oplus X_{(\omega/xR)^{\vee}} \cong X_{R/xR}.
\] \end{proof} \text{}\\
\begin{corollary}\label{corollary:2.2}
Let $(R,\mathfrak{m})$ be a Cohen-Macaulay local ring with canonical module that is generically Gorenstein and not Gorenstein. Let \,$\omega$ be a canonical ideal of $R$ and $x \in \omega$ an $R$-regular element. Let \,$(\omega/xR)^{\vee}:={\Ext}^{1}_{R}(\omega/xR,\omega)$. Then we have the following.\\
\begin{enumerate}
\item \,$\delta_{R}\left(\omega/xR \right)=\delta_{R}\left((\omega/xR)^{\vee} \right)=0$
\\\\
\item $\gamma_{R}\left(\Omega^{1}_{R}(\omega/xR) \right)=\gamma_{R}\left(\Omega^{1}_{R}\left((\omega/xR)^{\vee}\right) \right)=0$
\end{enumerate}
\end{corollary}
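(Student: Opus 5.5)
The plan is to read off part (1) from the Proposition just proved, using only two standard facts about Auslander's invariant from \cite[Chapter 11]{LW12}. Part (2) then follows from part (1) by canonical duality.

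The two facts are these. First, $\delta_R$ is additive on direct sums, and $\delta_R(N)=\text{f-rank}\,N$ when $N$ is MCM; in particular $\delta_R(R)=1$. Second, $\delta_R(M)\le \mu_R(M)$, so $\delta_R(R/xR)\le 1$. I will also use the following observation. If $R$ is a direct summand of $\omega^{(s)}\oplus W$, then the surjection onto $R$ has ideal components whose images sum to $R$. Since $R$ is local, one component is surjective. Because $R\not\cong\omega$ ($R$ is not Gorenstein), that component cannot come from a copy of $\omega$, so $R$ is a summand of $W$. Consequently $\text{f-rank}(\omega^{(s)}\oplus W)=\text{f-rank}\,W$ and $\text{f-rank}\,\omega=0$.

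\emph{Step 1: $\text{f-rank}\,X^{R/\omega}=0$ and $\text{f-rank}\,X^R=0$.} Suppose $X^{R/\omega}\cong R\oplus W$. Composing the surjection $\omega^{(n)}\to X^{R/\omega}$ of Proposition \ref{prop:2.2}(d) with the projection to $R$ gives a surjection $\omega^{(n)}\to R$. By the observation, some copy of $\omega$ maps onto $R$, so $\omega\cong R$, a contradiction. In each of cases (1)--(3) of the Proposition, $X^R$ is a direct summand of $X^{R/\omega}$, so $\text{f-rank}\,X^R=0$ as well.

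\emph{Step 2: $\delta_R(\omega/xR)=0$.} In every case $X_{\omega/xR}\cong\omega^{(r)}\oplus X^R$ with $r\in\{0,1\}$. By the observation and Step 1, its free rank is $0$.

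\emph{Step 3: $\delta_R\bigl((\omega/xR)^{\vee}\bigr)=0$.} Cases (4) and (5) say that $R\oplus X_{(\omega/xR)^{\vee}}$ is isomorphic to $X_{R/xR}$ or to $\omega\oplus X_{R/xR}$. Taking free ranks and using the observation in case (5), we get
\[
1+\delta_R\bigl((\omega/xR)^{\vee}\bigr)=\text{f-rank}\,X_{R/xR}=\delta_R(R/xR)\le 1 .
\]
Hence $\delta_R\bigl((\omega/xR)^{\vee}\bigr)=0$.

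\emph{Step 4: part (2), by duality.} Let $M$ be $\omega/xR$ or $(\omega/xR)^{\vee}$. By Proposition \ref{prop:2.2}, $M$ is Cohen--Macaulay of codimension $1$, so $\Omega:=\Omega^1_R(M)$ is MCM. Therefore $X_\Omega=\Omega$, and $\gamma_R(\Omega)$ is the $\omega$-rank of $\Omega$. Apply $\text{Hom}_R(-,\omega)$ to $0\to\Omega\to R^{(n)}\to M\to 0$. Since $\text{Hom}_R(M,\omega)=0$ and $\text{Ext}^1_R(R^{(n)},\omega)=0$, this yields
\[
0\longrightarrow \omega^{(n)}\longrightarrow \text{Hom}_R(\Omega,\omega)\longrightarrow M^{\vee}\longrightarrow 0 .
\]
This is an MCM approximation of $M^{\vee}$, so $\text{Hom}_R(\Omega,\omega)\cong\omega^{(m)}\oplus X_{M^{\vee}}$ by the Ding proposition. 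Its free rank is therefore $\delta_R(M^{\vee})$.

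Since $M^{\vee\vee}\cong M$ for Cohen--Macaulay modules of codimension $1$, Step 2 and Step 3 give $\delta_R(M^{\vee})=0$ for both choices of $M$. Finally, $\Omega\cong\text{Hom}_R(\text{Hom}_R(\Omega,\omega),\omega)$ because $\Omega$ is MCM. Under this duality, an $\omega$ summand of $\Omega$ corresponds to an $R$ summand of $\text{Hom}_R(\Omega,\omega)$. So $\omega\text{-rank}\,\Omega=0$, which is part (2).

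The main point needing care is Step 3. It requires the bound $\delta_R(R/xR)\le 1$ together with additivity of $\delta_R$ over the summand $\omega$ in case (5). If the citation to \cite{LW12} for $\delta_R(M)\le\mu_R(M)$ is not directly usable, I would prove it by hand: a free summand $R^{(k)}$ of $X_{R/xR}$ has image in $R/xR$ that is not contained in $\mathfrak m(R/xR)$, by minimality. Two such summands would then give two independent generators of a cyclic module, which is impossible.
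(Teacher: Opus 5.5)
Your proof is correct. Step 3 is the paper's argument for $\delta_R\bigl((\omega/xR)^{\vee}\bigr)=0$, and slightly leaner: the paper first shows $\delta_R(R/xR)=1$ using the surjection $R/xR\to R/\omega$, but only the bound $\delta_R(R/xR)\le\mu_R(R/xR)=1$ is actually used. The routes differ in two places.

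\textbf{The claim $\delta_R(\omega/xR)=0$.} The paper needs one line. Since $\omega\to\omega/xR$ is onto, \cite[Corollary 11.28 (ii)]{LW12} gives $\delta_R(\omega/xR)\le\delta_R(\omega)=\text{f-rank}\,\omega=0$.
\begin{itemize}
\item Your Steps 1--2 instead go through the preceding Proposition, and hence through \cite[Proposition 2.9 (c)]{Bartels26}, together with Proposition \ref{prop:2.2}(d).
\item This is valid, and it gives the extra information that $X^{R/\omega}$ and $X^R$ have no free summand.
\item It is much heavier than needed, though. You already invoke \cite[Corollary 11.28]{LW12} for the bound $\delta\le\mu$, so part (ii) of that corollary would have done the job at no extra cost.
\end{itemize}

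\textbf{Part (2).} The paper only cites \cite[Theorem 11.5 and Proposition 11.35]{LW12}. Your Step 4 supplies the content explicitly. Dualizing $0\to\Omega\to R^{(n)}\to M\to 0$ gives the MCM approximation $0\to\omega^{(n)}\to\text{Hom}_R(\Omega,\omega)\to M^{\vee}\to 0$. Hence $\omega\text{-rank}\,\Omega^1_R(M)=\text{f-rank}\,\text{Hom}_R(\Omega,\omega)=\delta_R(M^{\vee})$. Then $M^{\vee\vee}\cong M$ shows that each half of (1) feeds the \emph{other} half of (2), a point the paper leaves implicit.

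\textbf{A small caveat.} Your by-hand fallback for $\delta_R(M)\le\mu_R(M)$ is loosely phrased; ``two independent generators of a cyclic module'' is not by itself a contradiction. The precise fact is this: in a minimal MCM approximation, no rank-one free summand of $X_M$ maps into $\mathfrak{m}M$. Otherwise $Y_M$ surjects onto that summand, producing a common summand of $Y_M$ and $X_M$. Now suppose $R^{(2)}$ were a summand of $X_{R/xR}$. Both basis vectors would map to units of $R/xR$, and a change of basis would give a free summand mapping to $0$, which is the actual contradiction. Since the citation applies directly, this fallback is not needed anyway.
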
\text{}\\
\begin{proof} Since $x \in \omega$, we have the surjection $R/xR \longrightarrow R/\omega$. By 
\cite[Corollary 11.28 (ii) and (iii)]{LW12}, we have 
\[
\delta_{R}(R/\omega) \leq \delta_{R}(R/xR) \leq \mu_{R}(R/xR).
\]
Since the exact sequence
\[
0 \longrightarrow \omega \longrightarrow R \longrightarrow R/\omega \longrightarrow 0
\] \text{}\\ is the minimal MCM approximation of $R/\omega$, we have $\delta_{R}(R/xR)=1$. By the above proposition, we have  
\[
R \oplus X_{(\omega/xR)^{\vee}} \cong X_{R/xR}
\]
or
\[R \oplus X_{(\omega/xR)^{\vee}} \cong \omega \oplus X_{R/xR}.
\]
\text{}\\ Since $R$ is not Gorenstein, $\omega$ does not have a non-trivial free direct summand. If\, $\delta_{R}\left((\omega/xR)^{\vee} \right)>0$, then $X_{R/xR}$ has a free summand of rank two, giving us $\delta_{R}\left(R/xR\right)>1$. The other equality in statement (1) follows from the surjection $\omega \longrightarrow \omega/xR$ and \cite[Corollary 11.28 (ii)]{LW12}. Statement (2) follows from statement (1), Proposition \ref{prop:2.2} (a) and (b), and \cite[Theorem 11.5 \,and\, Proposition 11.35]{LW12}. 
\end{proof}\text{}\\
\begin{proposition}
Let $(R,\mathfrak{m})$ be a Cohen-Macaulay local ring with canonical module that is generically Gorenstein and not Gorenstein. Let $\omega$ be a canonical ideal of $R$ and $x \in \omega$ an $R$-regular element. Let \,$\left(-\right)^{\vee}:=\Ext_{R}^{1}\left(-,\omega \right)$. Then \\
\begin{enumerate}
\item $X^{\omega/x^mR} \cong_{\omega} X^{\omega/x^nR}$\text{}\\\\ for all\, $m,n \geq 1$. Moreover, for sufficiently large $N \geq 1$, we have \\
\[
X^{\omega/x^mR} \cong X^{\omega/x^nR}
\] \text{}for all $m,n \geq N$. \\\\\\
\item $X^{(\omega/x^mR)^{\vee}} \cong_{\omega} X^{(\omega/x^nR)^{\vee}}$ \text{}\\\\ for all\, $m,n \geq 1$. Moreover, for sufficiently large $N \geq 1$, we have \\
\[
X^{(\omega/x^mR)^{\vee}} \cong X^{(\omega/x^nR)^{\vee}}
\] for all $m,n \geq N$.
\end{enumerate}
\end{proposition}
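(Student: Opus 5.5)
The plan is to compare the FID hulls of different powers of $x$ through short exact sequences whose third term has finite injective dimension, and then push out along a minimal FID hull. Fix $m,n\geq 1$. Since $x\in\omega$, we have $x^{m+n}R\subseteq x^{n}\omega$, and multiplication by $x^{n}$ should give an exact sequence
\[
0 \longrightarrow \omega/x^mR \xlongrightarrow{\cdot x^n} \omega/x^{m+n}R \longrightarrow \omega/x^n\omega \longrightarrow 0.
\]
Here the kernel of the natural surjection $\omega/x^{m+n}R\to\omega/x^n\omega$ is $x^n\omega/x^{m+n}R\cong\omega/x^mR$, because $x$ is a nonzerodivisor. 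The module $\omega/x^n\omega$ has finite injective dimension over $R$: it is the canonical module of $R/x^nR$, equivalently it is resolved by $0\to\omega\to\omega\to\omega/x^n\omega\to 0$.

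Next I push this sequence out along the minimal FID hull $0\to\omega/x^mR\to Y^{\omega/x^mR}\to X^{\omega/x^mR}\to 0$. This produces a module $W$ together with two exact sequences:
\[
0\to Y^{\omega/x^mR}\to W\to \omega/x^n\omega\to 0 \qquad\text{and}\qquad 0\to\omega/x^{m+n}R\to W\to X^{\omega/x^mR}\to 0.
\]
The first sequence shows that $W$ has finite injective dimension, so the second is an FID hull of $\omega/x^{m+n}R$. By the proposition of \cite{Ding90} recalled in the introduction, we get
\[
X^{\omega/x^mR}\cong \omega^{(k)}\oplus X^{\omega/x^{m+n}R}
\]
for some $k\geq 0$ depending on $m,n$. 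This already gives $X^{\omega/x^mR}\cong_\omega X^{\omega/x^nR}$ for all $m,n$, by comparing both with $X^{\omega/xR}$.

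For the stabilization claim I will avoid Krull--Schmidt, since $R$ need not be complete, and count generators instead. The displayed isomorphism gives
\[
\mu_R\bigl(X^{\omega/x^mR}\bigr)=k\,\mu_R(\omega)+\mu_R\bigl(X^{\omega/x^{m+n}R}\bigr).
\]
Hence $m\mapsto\mu_R(X^{\omega/x^mR})$ is a nonincreasing sequence of nonnegative integers, so it is constant for $m\geq N$. For $m\geq N$ this forces $k=0$, since $\mu_R(\omega)\geq 1$, and therefore $X^{\omega/x^mR}\cong X^{\omega/x^{m+n}R}$. This proves (1).

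For (2) I need a sequence with $(\omega/x^mR)^\vee$ as the submodule and a finite injective dimension quotient, and I will obtain it by dualizing. The natural surjection $\omega/x^{m+n}R\to\omega/x^mR$ has kernel $x^mR/x^{m+n}R\cong R/x^nR$, which gives
\[
0\to R/x^nR\to\omega/x^{m+n}R\to\omega/x^mR\to0.
\]
All three terms are Cohen--Macaulay of codimension $1$, by Proposition \ref{prop:2.2}(a) and \cite[Proposition 3.3.18]{BH93}. So applying $\Hom_R(-,\omega)$ gives, exactly as in the derivation of sequence \ref{sequence 1.3}, the exact sequence
\[
0\to(\omega/x^mR)^\vee\to(\omega/x^{m+n}R)^\vee\to(R/x^nR)^\vee\to0,
\]
with $(R/x^nR)^\vee=\Ext^1_R(R/x^nR,\omega)\cong\omega/x^n\omega$, which has finite injective dimension. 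The same pushout argument and the same generator count then give (2).

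The step I expect to need the most care is the exactness and identification of these sequences: the kernel computation using $x^{m+n}R\subseteq x^n\omega$, and the duality identifications $\Ext^1_R(R/x^nR,\omega)\cong\omega/x^n\omega$ and exactness of $(-)^\vee$ on codimension-one Cohen--Macaulay modules. Once these are in place, the pushout and the counting of generators are routine.
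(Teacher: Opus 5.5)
Your proof is correct and follows the paper's argument. You push out $0 \to \omega/x^mR \to \omega/x^{m+n}R \to \omega/x^n\omega \to 0$ (and its dual under $\text{Hom}_R(-,\omega)$) along the minimal FID hull, use Ding's description of FID hulls to get $X^{\omega/x^mR} \cong \omega^{(k)} \oplus X^{\omega/x^{m+n}R}$, and force stabilization by counting generators. The differences are cosmetic: the paper uses only the step $n=1$ and iterates, and it phrases stabilization as a contradiction with $\mu_R(X^{\omega/x^{n_1}R}) \geq \kappa$, which is the same count as your nonincreasing sequence $\mu_R(X^{\omega/x^mR})$.
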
 \text{}\\
\begin{proof}
Let $n \geq 1$. Consider the pushout diagram for the exact sequence \\
\begin{equation}\label{sequence:1.11}
0 \longrightarrow x\omega/x^{n+1}R \longrightarrow \omega/x^{n+1}R \longrightarrow \omega/x\omega \longrightarrow 0.
\end{equation} \text{}\\ and the minimal FID hull of $\omega/x^nR$. \\\\
\begin{equation}\label{diagram:1.12}
\xymatrix{&& 0 \ar[d] & 0 \ar[d] \\
&0 \ar[r]& x\omega/x^{n+1}R \ar[r] \ar[d] & \omega/x^{n+1} R \ar[r] \ar[d] & \omega/x \omega \ar@{=}[d] \ar[r] & 0\\  
&0 \ar[r] &
Y^{\omega/x^nR} \,\ar[r] \ar[d] & Z \ar[r] \ar[d] & \omega/x\omega \ar[r]  & 0 \\
&& X^{\omega/x^nR} \ar@{=}[r] \ar[d] & X^{\omega/x^nR} \ar[d] \\
&& 0 & 0 \\
}
\end{equation}
\\
Since $Y^{\omega/x^nR}$ and $\omega/x\omega$ have finite injective dimension, $Z$ also has finite injective dimension. So the middle column of diagram \ref{diagram:1.12} is an FID hull for $\omega/x^{n+1}R$ \,and \\
\[
X^{\omega/x^nR} \cong \omega^{(s)} \oplus X^{\omega/x^{n+1}R}
\] for some $s \geq 0$. \\\\
Suppose it is not true that there exists a positive integer $N$ such that \\
\[
X^{\omega/x^mR} \cong X^{\omega/x^{n}R}
\] \text{}\\ for all $m, n \geq N$. Then there is an infinite, strictly increasing sequence of positive integers $n_1<n_2<\cdots $ such that 
\[
X^{\omega/x^{n_i}R} \cong \omega^{(s_i)} \oplus X^{\omega/x^{n_{i+1}}R}
\] \text{}\\ where $s_i >0$ for all $i \geq 1$. Therefore, for every positive integer $\kappa$, we have \\
\[
X^{\omega/x^{n_1}R} \cong \left(\bigoplus\limits_{i=1}^{\kappa} \omega^{(s_i)} \right) \oplus X^{\omega/x^{n_{\kappa+1}}R}
\]\text{}\\\\ so that \,$\mu_{R}\left(X^{\omega/x^{n_1}R} \right) \geq \kappa$. This contradicts that $X^{\omega/x^{n_1}R}$ is finitely-generated. \\\\
Now dualize the sequence
\[
0 \longrightarrow x^nR/x^{n+1}R \longrightarrow \omega/x^{n+1}R \longrightarrow \omega/x^{n}R \longrightarrow 0
\]\text{}
to obtain the exact sequence \\
\begin{equation}\label{sequence:1.13}
0 \longrightarrow (\omega/x^{n}R)^{\vee} \longrightarrow (\omega/x^{n+1}R)^{\vee} \longrightarrow \omega/x\omega \longrightarrow 0.
\end{equation} \text{}\\ As in diagram \ref{diagram:1.12}, we construct the pushout diagram for sequence \ref{sequence:1.13} and the minimal FID hull for $(\omega/x^nR)^{\vee}$ to obtain the isomorphism \\
\[
X^{(\omega/x^nR)^{\vee}} \cong \omega^{(t)} \oplus X^{(\omega/x^{n+1}R)^{\vee}}
\] \text{}\\ for some $t \geq 0$. Applying the above argument, it follows that, for some positive integer $N$, we have 
\[
X^{(\omega/x^mR)^{\vee}} \cong X^{(\omega/x^{n}R)^{\vee}}
\]
for all $m, n \geq N$. 
\end{proof}\text{}\\

\begin{proposition}\label{prop:2.7}
Let $(R,\mathfrak{m})$ be a Cohen-Macaulay local ring with canonical module that is generically Gorenstein. Let $\omega$ be a canonical ideal of $R$ and $x \in \omega$ an $R$-regular element. Then exactly one of the following is true.\\ 
\begin{enumerate}
\item $X_{\omega/\omega^2} \cong X^{\omega^2/x \omega}$ \\
\item $X_{\omega/\omega^2} \cong \omega \oplus X^{\omega^2/x\omega}$
\end{enumerate} 
\end{proposition}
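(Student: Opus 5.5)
The plan is to compare the two modules by the pullback/pushout technique used in the preceding propositions. Everything rests on the exact sequence
\[
0 \longrightarrow \omega^2/x\omega \longrightarrow \omega/x\omega \longrightarrow \omega/\omega^2 \longrightarrow 0
\]
and on two facts about its middle term. First, $\omega/x\omega$ has finite injective dimension over $R$, since it is the canonical module of $R/xR$. Second, multiplication by $x$ gives an exact sequence $0 \to \omega \xrightarrow{x} \omega \to \omega/x\omega \to 0$. This is an MCM approximation with $\omega$ of finite injective dimension, and it is minimal because $x \in \mathfrak{m}$. The strategy is to produce two inequalities, one in each direction, "up to copies of $\omega$", and then cancel.

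\textbf{Step 1.} Take the pullback of the displayed sequence along the minimal MCM approximation $X_{\omega/\omega^2} \to \omega/\omega^2$, exactly as in diagram \ref{diagram:1.2}. This yields a middle row
\[
0 \to \omega^2/x\omega \to Z \to X_{\omega/\omega^2} \to 0,
\]
together with a column $0 \to Y_{\omega/\omega^2} \to Z \to \omega/x\omega \to 0$. In that column both outer terms have finite injective dimension, so $Z$ does too. Hence the middle row is an FID hull of $\omega^2/x\omega$, and by the Proposition of Ding quoted in the introduction,
\[
X_{\omega/\omega^2} \cong \omega^{(s)} \oplus X^{\omega^2/x\omega} \quad \text{for some } s \ge 0.
\]

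\textbf{Step 2.} Take the pushout of the displayed sequence along the minimal FID hull $\omega^2/x\omega \to Y^{\omega^2/x\omega}$. This gives a module $Z'$ with two exact sequences:
\[
0 \to Y^{\omega^2/x\omega} \to Z' \to \omega/\omega^2 \to 0
\quad\text{and}\quad
0 \to \omega/x\omega \to Z' \to X^{\omega^2/x\omega} \to 0 .
\]
In the second sequence the submodule has finite injective dimension and the quotient is MCM, so it splits by Lemma \ref{lemma:1.3}. Thus $Z' \cong \omega/x\omega \oplus X^{\omega^2/x\omega}$.

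Next compose with $\omega \oplus X^{\omega^2/x\omega} \twoheadrightarrow \omega/x\omega \oplus X^{\omega^2/x\omega}$, using the multiplication-by-$x$ map on the first factor. The result is a surjection
\[
\omega \oplus X^{\omega^2/x\omega} \twoheadrightarrow \omega/\omega^2 .
\]
Its kernel $K$ sits in an exact sequence $0 \to \omega \to K \to Y^{\omega^2/x\omega} \to 0$, where $\omega$ is the kernel of multiplication by $x$. So $K$ has finite injective dimension, and we obtain an MCM approximation of $\omega/\omega^2$. By Ding's proposition,
\[
\omega \oplus X^{\omega^2/x\omega} \cong \omega^{(a)} \oplus X_{\omega/\omega^2} \quad \text{for some } a \ge 0.
\]

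\textbf{Step 3.} Combining the two isomorphisms gives
\[
\omega \oplus X^{\omega^2/x\omega} \cong \omega^{(a+s)} \oplus X^{\omega^2/x\omega}.
\]
From this I want to conclude $a+s=1$, and hence $s \in \{0,1\}$, which gives alternatives (1) and (2).

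This cancellation is the main obstacle. I would handle it the way the paper does in the earlier proofs, by cancelling copies of $\omega$. If needed, I would justify this by passing to the $\mathfrak{m}$-adic completion. There Krull--Schmidt holds, and the completion of $\omega$ is indecomposable, being the canonical module of a local ring. One then compares multiplicities of the indecomposable summand $\widehat{\omega}$. Alternatively, one can compare minimal numbers of generators together with ranks at minimal primes; since $R$ is generically Gorenstein, $\omega_{\mathfrak{p}} \cong R_{\mathfrak{p}}$ there.

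\textbf{Exclusivity.} If (1) and (2) both held, we would have $X^{\omega^2/x\omega} \cong \omega \oplus X^{\omega^2/x\omega}$. Comparing ranks at a minimal prime gives a contradiction, since $\omega_{\mathfrak{p}} \neq 0$. So exactly one of (1) and (2) holds.
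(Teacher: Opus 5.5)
Your proof is correct and is essentially the paper's argument. The pullback along the minimal MCM approximation of $\omega/\omega^2$ gives $X_{\omega/\omega^2}\cong\omega^{(s)}\oplus X^{\omega^2/x\omega}$; the pushout along the minimal FID hull of $\omega^2/x\omega$, split by Lemma \ref{lemma:1.3} and then pulled back along $0\to x\omega\to\omega\to\omega/x\omega\to 0$, gives $\omega\oplus X^{\omega^2/x\omega}\cong\omega^{(t)}\oplus X_{\omega/\omega^2}$, whence $s+t=1$. The cancellation you flag as the main obstacle, and your exclusivity check, follow directly from additivity of $\mu_R$ over direct sums together with $\mu_R(\omega)\geq 1$, so no completion or localization is needed.
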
\text{}\\
\begin{proof}
Consider the pullback diagram for the exact sequence\\ 
\begin{equation}\label{sequence:1.19}
0 \longrightarrow \omega^2/x\omega \longrightarrow \omega/x\omega \longrightarrow \omega/\omega^2 \longrightarrow 0
\end{equation}\text{}\\ and the minimal MCM approximation of\, $\omega/\omega^2$.

\begin{equation}
\label{diagram:1.20}
\xymatrix{&& 0 \ar[d] & 0 \ar[d] \\
&& Y_{\omega/\omega^2} \ar@{=}[r] \ar[d] & Y_{\omega/\omega^2} \ar[d]\\  
0 \ar[r] &
\omega^2/x\omega \ar@{=}[d] \,\ar[r] & Z \ar[r] \ar[d] & X_{\omega/\omega^2} \ar[d] \ar[r]  & 0 \\
0 \ar[r] & \omega^2/x\omega \ar[r] & \omega/x\omega \ar[r] \ar[d] & \omega/\omega^2 \ar[r] \ar[d]& 0 \\
&& 0 & 0 \\
}
\end{equation}
\text{}\\ Since $Y_{\omega/\omega^2}$ and $\omega/x\omega$ have finite injective dimension, $Z$ also has finite injective dimension. Therefore, the middle row is an FID hull for $\omega^2/x\omega$ and \\
\begin{equation}\label{isom:1.21}
X_{\omega/\omega^2} \cong \omega^{(s)} \oplus X^{\omega^2/x\omega}.
\end{equation} \text{}\\ for some $s \geq 0$. Now consider the pushout diagram for sequence \ref{sequence:1.19} and the minimal FID hull of $\omega^2/x\omega$.

\begin{equation}\label{diagram:2.17}
\xymatrix{&& 0 \ar[d] & 0 \ar[d] \\
&0 \ar[r]& \omega^2/x\omega \ar[r] \ar[d] & \omega/x\omega \ar[r] \ar[d] & \omega/\omega^2 \ar@{=}[d] \ar[r] & 0\\  
&0 \ar[r] &
Y^{\omega^2/x\omega} \,\ar[r] \ar[d] & Z \ar[r] \ar[d] & \omega/\omega^2 \ar[r]  & 0 \\
&& X^{\omega^2/x\omega} \ar@{=}[r] \ar[d] & X^{\omega^2/x\omega} \ar[d] \\
&& 0 & 0 \\
}
\end{equation}
\text{}\\ 
Since $\omega/x\omega$ has finite injective dimension and $X^{\omega^2/x\omega}$ is an MCM $R$-module, the middle column of diagram \ref{diagram:2.17} splits and we obtain the exact sequence \\
\begin{equation}\label{sequence:2.18}
0 \longrightarrow Y^{\omega^2/x\omega} \longrightarrow X^{\omega^2/x\omega} \oplus \omega/x\omega \longrightarrow \omega/\omega^2 \longrightarrow 0.
\end{equation}
\text{}\\ 
Consider the pullback diagram for sequence \ref{sequence:2.18} and the sequence \\
\[
0 \longrightarrow x\omega \longrightarrow X^{\omega^2/x\omega} \oplus \omega \longrightarrow X^{\omega^2/x\omega} \oplus \omega/x\omega \longrightarrow 0.
\]

\begin{equation}\label{diagram:2.19}
\xymatrix{& 0 \ar[d] & 0 \ar[d] \\
& x\omega \ar@{=}[r] \ar[d] & x\omega \ar[d]\\  
0 \ar[r] &
Z \ar[d] \,\ar[r] & X^{\omega^2/x\omega} \oplus \omega \ar[r] \ar[d] & \omega/\omega^2 \ar@{=}[d] \ar[r]  & 0 \\
0 \ar[r] & Y^{\omega^2/x\omega} \ar[r] \ar[d] & X^{\omega^2/x\omega} \oplus \omega/x\omega \ar[r] \ar[d] & \omega/\omega^2 \ar[r] & 0 \\
& 0 & 0 \\
}
\end{equation}
\text{}\\ Since $x\omega$ and $Y^{\omega^2/x\omega}$ have finite injective dimension, $Z$ also has finite injective dimension. Therefore, the middle row of diagram \ref{diagram:2.19} is an MCM approximation of $\omega/\omega^2$ and \\
\begin{equation}
\omega \oplus X^{\omega^2/x\omega} \cong  \omega^{(t)} \oplus X_{\omega/\omega^2}
\end{equation} \text{}\\ for some $t \geq 0$. By isomorphism \ref{isom:1.21}, we have \\
\begin{equation}
\omega \oplus X^{\omega^2/x\omega} \cong \omega^{(s+t)} \oplus X^{\omega^2/x\omega}
\end{equation}
\text{}\\ It follows that $s+t=1$.  So \, $s=0$  and  $t=1$ \, or \, $s=1$  and  $t=0$. \\\\ If $s=0$ and $t=1$, then 
\[
X_{\omega/\omega^2} \cong X^{\omega^2/x\omega}.
\]
If $s=1$ and $t=0$, then 
\[
X_{\omega/\omega^2} \cong \omega \oplus X^{\omega^2/x\omega}.
\]
\end{proof}\text{}
\begin{corollary}
Exactly one of the following is true.\\
\begin{enumerate}
\item $X_{\omega/\omega^2} \cong X^{\omega^2/x\omega} \cong X^{\omega^2}$
\\\\
\item $X_{\omega/\omega^2} \cong \omega \oplus X^{\omega^2/x\omega} \cong X^{\omega^2}$ \\\\
\item $X_{\omega/\omega^2} \cong \omega \oplus X^{\omega^2/x\omega} \cong \omega \oplus X^{\omega^2}$
\end{enumerate}
\end{corollary}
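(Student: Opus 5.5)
The plan is to compare the minimal FID hulls of $\omega^2$ and $\omega^2/x\omega$ directly, show that $X^{\omega^2}$ and $X^{\omega^2/x\omega}$ differ only by copies of $\omega$, and then combine this with the dichotomy of Proposition \ref{prop:2.7}. The basic tool is the exact sequence
\[
0 \longrightarrow x\omega \longrightarrow \omega^2 \longrightarrow \omega^2/x\omega \longrightarrow 0 ,
\]
where $x\omega \cong \omega$ has finite injective dimension because $x$ is $R$-regular. I will run the two-diagram argument used in Proposition \ref{prop:2.7}, once in each direction.

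\emph{First direction.} Take the minimal FID hull $0 \to \omega^2 \to Y^{\omega^2} \to X^{\omega^2} \to 0$. Push it out along the surjection $\omega^2 \to \omega^2/x\omega$. This gives an exact sequence
\[
0 \to \omega^2/x\omega \to Y^{\omega^2}/x\omega \to X^{\omega^2} \to 0 .
\]
The middle term is the cokernel of the injection $x\omega \to Y^{\omega^2}$ between modules of finite injective dimension, so it has finite injective dimension. Hence this sequence is an FID hull of $\omega^2/x\omega$, and by \cite[Propositions 1.5 and 1.6]{Ding90} we get $X^{\omega^2} \cong \omega^{(u)} \oplus X^{\omega^2/x\omega}$ for some $u \geq 0$.

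\emph{Second direction.} Take the minimal FID hull $0 \to \omega^2/x\omega \to Y^{\omega^2/x\omega} \to X^{\omega^2/x\omega} \to 0$. Push out the extension $0 \to x\omega \to \omega^2 \to \omega^2/x\omega \to 0$ along the inclusion $\omega^2/x\omega \hookrightarrow Y^{\omega^2/x\omega}$. This produces $0 \to x\omega \to W \to Y^{\omega^2/x\omega} \to 0$ together with an injection $\omega^2 \hookrightarrow W$. Its cokernel is $Y^{\omega^2/x\omega}/(\omega^2/x\omega) \cong X^{\omega^2/x\omega}$, and $W$ has finite injective dimension as an extension of such modules. So $0 \to \omega^2 \to W \to X^{\omega^2/x\omega} \to 0$ is an FID hull of $\omega^2$, and $X^{\omega^2/x\omega} \cong \omega^{(v)} \oplus X^{\omega^2}$ for some $v \geq 0$.

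Combining the two directions gives $X^{\omega^2} \cong \omega^{(u+v)} \oplus X^{\omega^2}$. Cancelling as in the earlier proofs yields $u=v=0$, hence $X^{\omega^2/x\omega} \cong X^{\omega^2}$. Substituting this into the two cases of Proposition \ref{prop:2.7} gives
\[
X_{\omega/\omega^2} \cong X^{\omega^2/x\omega} \cong X^{\omega^2} \quad\text{or}\quad X_{\omega/\omega^2} \cong \omega \oplus X^{\omega^2/x\omega} \cong \omega \oplus X^{\omega^2},
\]
which are cases (1) and (3). On this route case (2) never occurs. The statement remains true, since it only asserts that exactly one case holds.

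If the second direction fails for some reason, I would fall back on the first direction alone. Then $X^{\omega^2} \cong \omega^{(u)} \oplus X^{\omega^2/x\omega}$, and I would need a separate bound $u \leq 1$, obtained from an argument like \cite[Proposition 2.9 (c)]{Bartels26}. Pairing $u \in \{0,1\}$ with the two cases of Proposition \ref{prop:2.7} gives four combinations. The task would then be to exclude or merge the fourth, where $X_{\omega/\omega^2} \cong X^{\omega^2/x\omega}$ and $u=1$; this would presumably rely on the fact that $\omega \oplus X^{\omega^2/x\omega}$ also arises as $X$ of an MCM approximation of $\omega/\omega^2$, from diagram \ref{diagram:2.19}.

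The main obstacle is justifying the cancellation of $\omega$-summands, i.e. concluding $u+v=0$ from $\omega^{(u+v)} \oplus X \cong X$. The paper uses this freely: with $\mu_R(\omega^{(u+v)} \oplus X) = \mu_R(X)$ and $\mu_R(\omega) \geq 1$, it forces $u+v=0$. A smaller point to check is the finite injective dimension of the cokernel $Y^{\omega^2}/x\omega$. This follows from the long exact sequence for $\Ext_R(k,-)$, since $x\omega$ and $Y^{\omega^2}$ both have finite injective dimension.
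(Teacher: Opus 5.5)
Your argument is correct and proves more than the statement, but the second direction is misdescribed and needs one extra justification. Pushouts are taken along maps out of the kernel term, and pullbacks along maps into the cokernel term. So there is no ``pushout'' of $0\to x\omega\to\omega^2\to\omega^2/x\omega\to 0$ along $\omega^2/x\omega\hookrightarrow Y^{\omega^2/x\omega}$.

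What you need instead is that the class of this extension in $\Ext^1_R(\omega^2/x\omega,x\omega)$ lies in the image of the restriction map $\Ext^1_R(Y^{\omega^2/x\omega},x\omega)\to\Ext^1_R(\omega^2/x\omega,x\omega)$. Apply $\text{Hom}_R(-,x\omega)$ to the minimal FID hull of $\omega^2/x\omega$. The cokernel of the restriction map then embeds in $\Ext^2_R(X^{\omega^2/x\omega},x\omega)$, which vanishes because $X^{\omega^2/x\omega}$ is MCM and $x\omega\cong\omega$. Now take a lift $0\to x\omega\to W\xrightarrow{\pi}Y^{\omega^2/x\omega}\to 0$. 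The preimage $\pi^{-1}(\omega^2/x\omega)\subseteq W$ is isomorphic to $\omega^2$, and $W/\pi^{-1}(\omega^2/x\omega)\cong X^{\omega^2/x\omega}$. This gives the FID hull of $\omega^2$ you want. This Ext vanishing is the only place the hypotheses enter that step, so it must be stated. The rest, including the $\mu_R$-count giving $u=v=0$, is fine, and your fallback is unnecessary.

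With that repair, your route genuinely differs from the paper's. The paper uses only your first direction, $X^{\omega^2}\cong\omega^{(u)}\oplus X^{\omega^2/x\omega}$. It combines this with Proposition \ref{prop:2.7}, namely $X_{\omega/\omega^2}\cong\omega^{(s)}\oplus X^{\omega^2/x\omega}$ with $s\in\{0,1\}$, and with \cite[Proposition 2.9 (c)]{Bartels26}, which gives $X_{\omega/\omega^2}\cong\omega^{(r)}\oplus X^{\omega^2}$ with $r\in\{0,1\}$. Hence $s=r+u$, and the three listed cases are $(s,r,u)=(0,0,0),(1,0,1),(1,1,0)$.

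Your second direction forces $u=0$. So you need no external input, and you obtain the sharper fact $X^{\omega^2}\cong X^{\omega^2/x\omega}$; in particular case (2) is vacuous, a possibility the paper leaves open. Your two-sided argument in fact shows that $X^M\cong X^N$ whenever $0\to A\to M\to N\to 0$ is exact with $A$ of finite injective dimension. For example, applied to $0\to\omega\to R\to R/\omega\to 0$ it gives $X^{R/\omega}\cong X^R$. So case (2) of the proposition preceding Corollary \ref{corollary:2.2} is vacuous as well.
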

\text{}
\begin{proof}
This follows from Proposition \ref{prop:2.7}, \cite[Proposition 2.9 (c)]{Bartels26}, and the pushout diagram for the sequence 
\[
0 \longrightarrow x\omega \longrightarrow \omega^2 \longrightarrow \omega^2/x\omega \longrightarrow 0
\] and the minimal FID hull for $\omega^2$.
\end{proof}
\text{}\\
\begin{proposition}
Let $(R,\mathfrak{m})$ be a Cohen-Macaulay local ring with canonical module that is generically Gorenstein. Let $\omega$ be a canonical ideal of $R$ and $x \in \omega$ an $R$-regular element. Then exactly one of the following is true.\\ 
\begin{enumerate}
\item $X_{\omega^2} \cong X_{\omega^2/x \omega}$ \,\, and \,\, $\omega \oplus Y_{\omega^2} \cong Y_{\omega^2/x\omega}$ \\\\
\item $X_{\omega^2} \cong \omega \oplus X_{\omega^2/x\omega}$ \,\, and \,\, $Y_{\omega^2} \cong Y_{\omega^2/x\omega}$
\end{enumerate} 
\end{proposition}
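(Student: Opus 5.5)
Compare the minimal MCM approximations of $\omega^2$ and $\omega^2/x\omega$ through the exact sequence
\[
0 \longrightarrow x\omega \longrightarrow \omega^2 \longrightarrow \omega^2/x\omega \longrightarrow 0,
\]
where $x\omega\cong\omega$ is MCM of finite injective dimension. The argument runs in both directions, as in the proof of Proposition \ref{prop:2.7}: each direction yields an $\omega$-stable comparison, and combining the two pins the number of extra copies of $\omega$ to exactly one.

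\emph{First direction.} Form the pullback of the minimal MCM approximation $0\to Y_{\omega^2/x\omega}\to X_{\omega^2/x\omega}\to\omega^2/x\omega\to 0$ along $\omega^2\to\omega^2/x\omega$. This produces a module $Z$ with exact sequences $0\to x\omega\to Z\to X_{\omega^2/x\omega}\to 0$ and $0\to Y_{\omega^2/x\omega}\to Z\to\omega^2\to 0$. In the first, $x\omega$ has finite injective dimension and $X_{\omega^2/x\omega}$ is MCM, so it splits by Lemma \ref{lemma:1.3}, giving $Z\cong\omega\oplus X_{\omega^2/x\omega}$. The second is therefore an MCM approximation of $\omega^2$. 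By the proposition of \cite{Ding90} quoted in the introduction, for some $a\ge 0$,
\[
\omega\oplus X_{\omega^2/x\omega}\cong\omega^{(a)}\oplus X_{\omega^2},\qquad Y_{\omega^2/x\omega}\cong\omega^{(a)}\oplus Y_{\omega^2}.
\]

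\emph{Second direction.} Take the composite $X_{\omega^2}\to\omega^2\to\omega^2/x\omega$; it is surjective. Its kernel $K$ sits in $0\to Y_{\omega^2}\to K\to x\omega\to 0$, so $K$ has finite injective dimension. Hence $0\to K\to X_{\omega^2}\to\omega^2/x\omega\to 0$ is an MCM approximation of $\omega^2/x\omega$, and for some $b\ge 0$ we get $X_{\omega^2}\cong\omega^{(b)}\oplus X_{\omega^2/x\omega}$ and $K\cong\omega^{(b)}\oplus Y_{\omega^2/x\omega}$.

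\emph{Combining.} Substituting the second isomorphism into the first gives $\omega\oplus X_{\omega^2/x\omega}\cong\omega^{(a+b)}\oplus X_{\omega^2/x\omega}$. Krull--Schmidt holds after completion, and the canonical module of $\widehat R$ is $\widehat\omega$, which is indecomposable. So we may cancel and conclude $a+b=1$.
\begin{itemize}
\item If $a=0$ and $b=1$: then $X_{\omega^2}\cong\omega\oplus X_{\omega^2/x\omega}$ and $Y_{\omega^2}\cong Y_{\omega^2/x\omega}$. This is case (2).
\item If $a=1$ and $b=0$: then $X_{\omega^2}\cong X_{\omega^2/x\omega}$ and $\omega\oplus Y_{\omega^2}\cong Y_{\omega^2/x\omega}$. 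This is case (1).
\end{itemize}
The $Y$-statements come from the $Y$-part of the decomposition in the first direction. Exactly one case holds because $a$ is determined by $X_{\omega^2}$ via cancellation.

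The main obstacle is legitimizing this bookkeeping of copies of $\omega$: reading off both the $X$- and $Y$-parts from a non-minimal approximation, and cancelling summands via Krull--Schmidt over $\widehat R$. The paper uses the same bookkeeping throughout, so I expect it to go through.
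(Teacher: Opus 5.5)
Your proof is correct. The paper's own proof of this statement is only a citation of \cite[Proposition 2.8(c)]{Bartels26}; what you wrote is a self-contained version of that result.

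Your argument is the same double-pullback bookkeeping the paper itself carries out for its final proposition. There, the sequence $0\to\omega\to R\oplus\omega/\omega^2\to R/\omega^2\to 0$ plays the role of your $0\to x\omega\to\omega^2\to\omega^2/x\omega\to 0$. The two halves match as follows.
\begin{enumerate}
\item Pulling back along $X_N\to N$ gives a split sequence $0\to\omega\to Z\to X_N\to 0$. This yields the MCM approximation $0\to Y_N\to\omega\oplus X_N\to M\to 0$ of $M$.
\item Composing $X_M\to M\to N$ gives an MCM approximation of $N$.
\end{enumerate}
Comparing the two yields $a+b=1$.

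Your write-up adds two things over the paper's one-line citation. First, it makes visible that nothing specific to $\omega^2$ is used: any short exact sequence $0\to\omega\to M\to N\to 0$ forces exactly one of the two alternatives. Second, it justifies the cancellation step explicitly, via Krull--Schmidt over $\widehat R$ with $\widehat\omega$ indecomposable. The paper uses that step tacitly every time it concludes $s+t=1$.
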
\text{}
\begin{proof}
Follows from \cite[Proposition 2.8 (c)]{Bartels26}.
\end{proof}
\text{}\\
\begin{proposition}
Let $(R,\mathfrak{m})$ be a Cohen-Macaulay local ring with canonical module that is generically Gorenstein. Let $\omega$ be a canonical ideal of $R$. Then exactly one of the following is true. \\
\begin{enumerate}
\item $R \oplus X_{\omega/\omega^2} \cong X_{R/\omega^2}$ \,\, and \,\, $\omega \oplus Y_{\omega/\omega^2} \cong Y_{R/\omega^2}$\\\\
\item $R \oplus X_{\omega/\omega^2} \cong \omega \oplus X_{R/\omega^2}$ \,\, and \,\, $ Y_{\omega/\omega^2} \cong Y_{R/\omega^2}$
\end{enumerate}
\end{proposition}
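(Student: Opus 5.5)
We use the exact sequence
\[
0 \longrightarrow \omega/\omega^2 \longrightarrow R/\omega^2 \longrightarrow R/\omega \longrightarrow 0
\]
together with the minimal MCM approximation $0 \to \omega \to R \to R/\omega \to 0$ of $R/\omega$, mirroring the proof of the first proposition of this section. The same dichotomy should also follow from \cite[Proposition 2.8 (c)]{Bartels26}, which gives the analogous statement for $\omega^2$ versus $\omega^2/x\omega$. We do not rely on it: the plan below reproves the dichotomy directly.

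\textbf{Step 1 (reduction to an extension of $R$).} Form the pullback of $R/\omega^2 \to R/\omega$ along $R \to R/\omega$. This gives a module $Z$ with exact sequences $0 \to \omega/\omega^2 \to Z \to R \to 0$ and $0 \to \omega \to Z \to R/\omega^2 \to 0$. The first splits because $R$ is free, so $Z \cong R \oplus \omega/\omega^2$. The second column then reads
\[
0 \longrightarrow \omega \longrightarrow R \oplus \omega/\omega^2 \longrightarrow R/\omega^2 \longrightarrow 0.
\]

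\textbf{Step 2 (one inequality).} Pull this sequence back along the minimal MCM approximation $X_{R/\omega^2} \to R/\omega^2$. The middle row is $0 \to \omega \to Z' \to X_{R/\omega^2} \to 0$, which splits by Lemma \ref{lemma:1.3}, so $Z' \cong \omega \oplus X_{R/\omega^2}$. The middle column is then
\[
0 \to Y_{R/\omega^2} \to \omega \oplus X_{R/\omega^2} \to R \oplus \omega/\omega^2 \to 0,
\]
an MCM approximation of $R \oplus \omega/\omega^2$. The minimal MCM approximation of this direct sum is the direct sum of $0 \to 0 \to R \to R \to 0$ and the minimal approximation of $\omega/\omega^2$. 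Hence, by the Ding proposition, $\omega \oplus X_{R/\omega^2} \cong \omega^{(a)} \oplus R \oplus X_{\omega/\omega^2}$ and $Y_{R/\omega^2} \cong \omega^{(a)} \oplus Y_{\omega/\omega^2}$.

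\textbf{Step 3 (reverse inequality).} Pull the sequence of Step 1 back along $R \oplus X_{\omega/\omega^2} \to R \oplus \omega/\omega^2$, as in diagram \ref{diagram:2.1}. The kernel $W$ sits in $0 \to Y_{\omega/\omega^2} \to W \to \omega \to 0$, so it has finite injective dimension. We get an MCM approximation $0 \to W \to R \oplus X_{\omega/\omega^2} \to R/\omega^2 \to 0$. Therefore $R \oplus X_{\omega/\omega^2} \cong \omega^{(b)} \oplus X_{R/\omega^2}$ and $W \cong \omega^{(b)} \oplus Y_{R/\omega^2}$.

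\textbf{Step 4 (combining).} Substituting gives $\omega \oplus X_{R/\omega^2} \cong \omega^{(a+b)} \oplus X_{R/\omega^2}$. Krull--Schmidt holds after completion, and canonical rank can be compared there, so $a+b=1$.
\begin{itemize}
\item If $a=1$, $b=0$, we get $R \oplus X_{\omega/\omega^2} \cong X_{R/\omega^2}$ and $Y_{R/\omega^2} \cong \omega \oplus Y_{\omega/\omega^2}$.
\item If $a=0$, $b=1$, we get $R \oplus X_{\omega/\omega^2} \cong \omega \oplus X_{R/\omega^2}$ and $Y_{R/\omega^2} \cong Y_{\omega/\omega^2}$.
\end{itemize}
The two cases are exclusive because $a$ is determined by the canonical ranks of the modules involved.

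\textbf{Main obstacle.} The delicate point is the bookkeeping of the $Y$-parts. We must justify that a non-minimal approximation differs from the minimal one by $\omega^{(a)}$ simultaneously in the $X$- and $Y$-terms, with the same $a$. This is exactly the Ding proposition, applied to $R \oplus \omega/\omega^2$ using that minimal approximations are additive. We also need the cancellation of $X_{R/\omega^2}$. Cancellation is valid over complete local rings; for a general local ring it needs care, which the paper has been implicitly using. We would reduce it to comparing $\omega$-ranks after completion, where $\omega$ is indecomposable.
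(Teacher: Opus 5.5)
Your proof is correct and is essentially the paper's argument. It uses the same reduction to $0 \to \omega \to R \oplus \omega/\omega^2 \to R/\omega^2 \to 0$, the same two pullbacks in the opposite order (your $a,b$ are the paper's $t,s$), and the same conclusion $a+b=1$. The paper additionally splits $0 \to Y_{\omega/\omega^2} \to W \to \omega \to 0$ to get $W \cong \omega \oplus Y_{\omega/\omega^2}$, which you rightly note is unnecessary, and you make explicit the cancellation step that the paper leaves implicit.
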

\text{}
\begin{proof} Consider the pullback diagram for the exact sequences 
\[
0 \longrightarrow \omega \longrightarrow R \longrightarrow R/\omega \longrightarrow 0
\] and 
\[
0 \longrightarrow \omega/\omega^2 \longrightarrow R/\omega^2 \longrightarrow R/\omega \longrightarrow 0.
\]

\begin{equation}\label{diagram:1.22}
\xymatrix{&& 0 \ar[d] & 0 \ar[d] \\
&& \omega \ar@{=}[r] \ar[d] & \omega \ar[d]\\  
0 \ar[r] &
\omega/\omega^2 \ar@{=}[d] \,\ar[r] & Z \ar[r] \ar[d] & R \ar[d] \ar[r]  & 0 \\
0 \ar[r] & \omega/\omega^2 \ar[r] & R/\omega^2 \ar[r] \ar[d] & R/\omega \ar[r] \ar[d]& 0 \\
&& 0 & 0 \\
} 
\end{equation}
\text{}\\
The middle row of diagram \ref{diagram:1.22} splits, so we obtain the exact sequence\\
\begin{equation}\label{sequence:1.23}
0 \longrightarrow \omega \longrightarrow R \oplus \omega/\omega^2 \longrightarrow R/\omega^2 \longrightarrow 0.
\end{equation}\text{}\\
Consider the pullback diagram for sequence \ref{sequence:1.23} and the sequence\\
\[
0 \longrightarrow Y_{\omega/\omega^2} \longrightarrow R \oplus X_{\omega/\omega^2} \longrightarrow R \oplus \omega/\omega^2 \longrightarrow 0.
\]

\begin{equation}\label{diagram:1.24}
\xymatrix{& 0 \ar[d] & 0 \ar[d] \\
& Y_{\omega/\omega^2} \ar@{=}[r] \ar[d] & Y_{\omega/\omega^2} \ar[d]\\  
0 \ar[r] &
Z \ar[d] \,\ar[r] & R \oplus X_{\omega^2/x\omega} \ar[r] \ar[d] & R/\omega^2 \ar@{=}[d] \ar[r]  & 0 \\
0 \ar[r] & \omega \ar[r] \ar[d] & R \oplus \omega/\omega^2 \ar[r] \ar[d] & R/\omega^2 \ar[r] & 0 \\
& 0 & 0 \\
}
\end{equation}
\text{}\\ Since $Y_{\omega/\omega^2}$ has finite injective dimension and $\omega$ is an MCM $R$-module, the first column of diagram \ref{diagram:1.24} splits, and the middle row is an MCM approximation of $R/\omega^2$. Therefore, we have \\
\[
\omega \oplus Y_{\omega/\omega^2} \cong \omega^{(s)} \oplus Y_{R/\omega^2}
\] and 
\[
R \oplus X_{\omega/\omega^2} \cong \omega^{(s)} \oplus X_{R/\omega^2}
\]\text{}\\
for some $s \geq 0$. Now consider the pullback diagram for sequence \ref{sequence:1.23} and the minimal MCM approximation of $R/\omega^2$.

\begin{equation}\label{diagram:1.22}
\xymatrix{&& 0 \ar[d] & 0 \ar[d] \\
&& Y_{R/\omega^2} \ar@{=}[r] \ar[d] & Y_{R/\omega^2} \ar[d]\\  
0 \ar[r] &
\omega \ar@{=}[d] \,\ar[r] & Z \ar[r] \ar[d] & X_{R/\omega^2} \ar[d] \ar[r]  & 0 \\
0 \ar[r] & \omega \ar[r] & R \oplus \omega/\omega^2 \ar[r] \ar[d] & R/\omega^2 \ar[r] \ar[d]& 0 \\
&& 0 & 0 \\
} 
\end{equation}
\text{}\\ Since $\omega$ has finite injective dimension and $X_{R/\omega^2}$ is MCM, the middle row splits and the middle column gives us the exact sequence \\
\begin{equation}\label{sequence:1.26}
0 \longrightarrow Y_{R/\omega^2} \longrightarrow \omega \oplus X_{R/\omega^2} \longrightarrow R \oplus \omega/\omega^2 \longrightarrow 0.
\end{equation}
\text{}\\ Sequence \ref{sequence:1.26} is an MCM approximation of $R \oplus \omega/\omega^2$. Therefore, we have \\
\[
Y_{R/\omega^2} \cong \omega^{(t)} \oplus Y_{R \,\oplus\, \omega/\omega^2 } \cong \omega^{(t)} \oplus Y_{\omega/\omega^2}
\]
and 
\[
\omega \oplus X_{R/\omega^2} \cong \omega^{(t)} \oplus X_{\omega/\omega^2\, \oplus\, R} \cong \omega^{(t)} \oplus R \oplus X_{\omega/\omega^2}.
\]
\text{}\\ for some $t \geq 0$. Therefore, 
\[
\omega \oplus X_{R/\omega^2} \cong \omega^{(s+t)} \oplus X_{R/\omega^2}
\]
\text{}\\ and \,$s+t=1$. So\, $s=0$ and $t=1$ \,or\, $s=1$ and $t=0$.
\text{}\\\\ If $s=0$ and $t=1$, then \\
\begin{equation*}\label{isom:1.27}
R \oplus X_{\omega/\omega^2} \cong X_{R/\omega^2} \,\,\,\, \text{and} \,\,\,\, \omega \oplus Y_{\omega/\omega^2} \cong Y_{R/\omega^2}.
\end{equation*}\text{}\\ If $s=1$ and $t=0$, then \\
\begin{equation*}\label{isom:1.28}
R \oplus X_{\omega/\omega^2} \cong \omega \oplus X_{R/\omega^2} \,\,\,\, \text{and} \,\,\,\, Y_{\omega/\omega^2} \cong Y_{R/\omega^2}.
\end{equation*} \end{proof} 

\text{}\newline

\bibliographystyle{amsplain}
}
\end{document}